\newcommand{\xequal}[2][]{\ext@arrow 0055{\equalfill@}{#1}{#2}}
\def\equalfill@{\arrowfill@\Relbar\Relbar\Relbar}
\newcommand{\maru}[1]{\raise0.2ex\hbox{\textcircled{\scriptsize{#1}}}}
 \def\NZQ{\mathbb}               
 \def\ZZ{{\NZQ Z}}
 \def\RR{{\NZQ R}}
 \def\Pc{{\mathcal P}}
 \def\opn#1#2{\def#1{\operatorname{#2}}} 
 \opn\chara{char} \opn\length{\ell} \opn\pd{pd} \opn\rk{rk}
 \opn\projdim{proj\,dim} \opn\injdim{inj\,dim} \opn\rank{rank}
 \opn\depth{depth} \opn\grade{grade} \opn\height{height}
 \opn\embdim{emb\,dim} \opn\codim{codim}
 \opn\Tr{Tr} \opn\bigrank{big\,rank}
 \opn\superheight{superheight}\opn\lcm{lcm}
 \opn\trdeg{tr\,deg}
 \opn\reg{reg} \opn\lreg{lreg} \opn\ini{in} \opn\lpd{lpd}
 \opn\size{size} \opn\sdepth{sdepth}
 \opn\link{link}\opn\fdepth{fdepth}\opn\lex{lex}
 \opn\tr{tr}
 \opn\type{type}
 \opn\gap{gap}
 \opn\arithdeg{arith-deg}
 \opn\revlex{revlex}
 \opn\cut{cut}
 \opn\div{div} \opn\Div{Div} \opn\cl{cl} \opn\Cl{Cl}
 \opn\Spec{Spec} \opn\Supp{Supp} \opn\supp{supp} \opn\Sing{Sing}
 \opn\Ass{Ass} \opn\Min{Min}\opn\Mon{Mon}
 \opn\Ann{Ann} \opn\Rad{Rad} \opn\Soc{Soc}
 \opn\Im{Im} \opn\Ker{Ker} \opn\Coker{Coker} \opn\Am{Am}
 \opn\Hom{Hom} \opn\Tor{Tor} \opn\Ext{Ext} \opn\End{End}
 \opn\Aut{Aut} \opn\id{id}
 \opn\nat{nat}
 \opn\pff{pf}
 \opn\Pf{Pf} \opn\GL{GL} \opn\SL{SL} \opn\mod{mod} \opn\ord{ord}
 \opn\Gin{Gin} \opn\Hilb{Hilb}\opn\sort{sort}
 \opn\PF{PF}\opn\Ap{Ap}
 \opn\mult{mult}
 \opn\bight{bight}
 \opn\aff{aff}
 \opn\relint{relint} \opn\st{st}
 \opn\lk{lk} \opn\cn{cn} \opn\core{core} \opn\vol{vol}  \opn\inp{inp} \opn\nilpot{nilpot}
 \opn\link{link} \opn\star{star}\opn\lex{lex}\opn\set{set}
 \opn\width{wd}
 \opn\Fr{F}
 \opn\QF{QF}
 \opn\G{G}
 \opn\type{type}\opn\res{res}
 \opn\conv{conv}
 \opn\Deg{Deg}
 \opn\Sym{Sym}
 \opn\gr{gr}
 \def\pot#1#2{#1[\kern-0.28ex[#2]\kern-0.28ex]}
 \opn\dirlim{\underrightarrow{\lim}}
 \opn\inivlim{\underleftarrow{\lim}}
 \def\Implies{\ifmmode\Longrightarrow \else
         \unskip${}\Longrightarrow{}$\ignorespaces\fi}
 \def\implies{\ifmmode\Rightarrow \else
         \unskip${}\Rightarrow{}$\ignorespaces\fi}
 \def\iff{\ifmmode\Longleftrightarrow \else
         \unskip${}\Longleftrightarrow{}$\ignorespaces\fi}
 \newtheorem{Theorem}{Theorem}[section]
 \newtheorem{Lemma}[Theorem]{Lemma}
 \newtheorem{Example}[Theorem]{Example}
 \let\epsilon\varepsilon
 \let\kappa=\varkappa
 \def\qed{\ifhmode\textqed\fi
       \ifmmode\ifinner\quad\qedsymbol\else\dispqed\fi\fi}
 \def\textqed{\unskip\nobreak\penalty50
        \hskip2em\hbox{}\nobreak\hfil\qedsymbol
        \parfillskip=0pt \finalhyphendemerits=0}
 \def\dispqed{\rlap{\qquad\qedsymbol}}
 \opn\dis{dis}
 \def\pnt{{\raise0.5mm\hbox{\large\bf.}}}
 \opn\Lex{Lex}
\begin{document}

\title{The minimal volume of a lattice polytope}
\author {Ichiro Sainose, Ginji Hamano, Tatsuo Emura and Takayuki Hibi}
\address{Ichiro Sainose, 
Hiroshima Municipal Motomachi Senior High School, 
Hiroshima, 730-0005, Japan}
\email{sainose23517257@nifty.com}
\address{Ginji Hamano, 
School of Science and Engineering, Tokyo Denki University, 
Saitama 350-0394, Japan}
\email{18hz002@ms.dendai.ac.jp}
\address{Tatsuo Emura,
Department of Pure and Applied Mathematics, Graduate School of Information Science and Technology, Osaka University, Osaka 565-0871, Japan}
\email{skmj50023@ares.eonet.ne.jp}
\address{Takayuki Hibi, 
Department of Pure and Applied Mathematics, Graduate School of Information Science and Technology, Osaka University, Osaka 565-0871, Japan}
\email{hibi@math.sci.osaka-u.ac.jp}

\dedicatory{ }
\keywords{lattice polytope, triangulation, Castelnuovo polytope}
\subjclass[2020]{Primary 52B20}

\begin{abstract}
Let $\Pc \subset \RR^d$ be a lattice polytope of dimension $d$.  Let $b$ denote the number of lattice points belonging to the boundary of $\Pc$ and $c$ that to the interior of $\Pc$.  It follows from a lower bound theorem of Ehrhart polynomials that, when $c > 0$, the volume of $\Pc$ is bigger than or equal to $(dc + (d-1)b - d^2 + 2)/d!$.  In the present paper, via triangulations, a short and elementary proof of the minimal volume formula  is given.  
\end{abstract}

\maketitle

\section{Introduction}
Let $\Pc \subset \RR^d$ be a {\em lattice polytope} of dimension $d$.  In other words, $\Pc$ is a convex polytope of dimension $d$ each of whose vertices belongs to $\ZZ^d$.  A {\em lattice point} of $\RR^d$ is a point belonging to $\ZZ^d$.  Let $b = b(\Pc)$ denote the number of lattice points belonging to the boundary $\partial \Pc$ of $\Pc$ and $c = c(\Pc)$ that to the interior of $\Pc$.  It follows from the lower bound theorem of Ehrhart polynomials \cite{hibi} that, when $c > 0$, 
\begin{eqnarray}
\label{formula}
{\rm vol}(\Pc) \geq (d \cdot c(\Pc) + (d-1) \cdot b(\Pc) - d^2 + 2)/d!,
\end{eqnarray}
where ${\rm vol}(\Pc)$ is the (Lebesgue) volume of $\Pc$.  However, the argument done in \cite{hibi} is rather complicated with deep techniques on polytopes.  In the present paper a short and elementary proof of the minimal volume formula (\ref{formula}) will be given.  Pick's formula guarantees that, when $d = 2$, the inequality (\ref{formula}) is an equality \cite{pick}.

A lattice polytope $\Pc \subset \RR^d$ of dimension $d$ is called {\em Castelnuovo} \cite{kawaguchi} if the equality holds in (\ref{formula}).  A few remarks on Castelnuovo polytopes will be also stated.

\section{Minimal volume formula}
In general, let $\Pc \subset \RR^d$ be a convex polytope of dimension $d$ and $V \subset \Pc$ a finite set to which each of the vertices of $\Pc$ belongs.  A {\em triangulation} of $\Pc$ on $V$ is a collection $\Gamma$ of $d$-simplices (simplices of dimension $d$) for which
\begin{itemize}
\item
each vertex of each $d$-simplex $F \in \Gamma$ belongs to $V$;
\item
each $x \in V$ is a vertex of a $d$-simplex $F \in \Gamma$;
\item
if $F \in \Gamma$ and $G \in \Gamma$, then $F \cap G$ is a face of $F$ and of $G$; 
\item
$\Pc = \bigcup_{F \in \Gamma} F$.
\end{itemize} 
The existence of a triangulation of $\Pc$ on $V$ is guaranteed by \cite[Lemma 1.1]{stanley}.  Thus in particular, if $\Pc$ is a lattice polytope, then a triangulation of $\Pc$ on $\Pc \cap \ZZ^d$ exists.
 
\begin{Lemma}
\label{Chopin}
Let $\Pc \subset \RR^d$ be a convex polytope of dimension $d$ and $V \subset \Pc$ a finite set to which each of the vertices of $\Pc$ belongs.  Let $b(\Pc) = |V \cap \partial \Pc|$, where $\partial \Pc$ is the boundary of $\Pc$, and $c(\Pc) = |V \cap (\Pc \setminus \partial \Pc)|$, where $\Pc \setminus \partial \Pc$ is the interior of $\Pc$.  Suppose that $c(\Pc) > 0$.  Then there exists a triangulation $\Gamma_\Pc$ of $\Pc$ on $V$ with 
\[
|\Gamma_\Pc| \geq d \cdot c(\Pc) + (d-1) \cdot b(\Pc) - d^2 + 2.  
\]
\end{Lemma}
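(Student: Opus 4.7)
The plan is to build $\Gamma_{\Pc}$ explicitly in three stages: triangulate $\partial\Pc$ on $V\cap\partial\Pc$, cone from a distinguished interior lattice point of $V$ to produce a triangulation of $\Pc$, and then insert the remaining interior points one at a time by stellar subdivision. The target bound should split naturally: the initial boundary triangulation will be controlled by the Lower Bound Theorem (LBT) for simplicial $(d-1)$-spheres, while each subsequent interior insertion will contribute at least $d$ new $d$-simplices.

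For the first two stages, a triangulation $\Gamma_{0}$ of $\partial\Pc$ on $V\cap\partial\Pc$ is obtained by restricting any triangulation of $\Pc$ on $V$ (supplied by the already cited Stanley lemma) to the boundary. The complex $\Gamma_{0}$ realises $\partial\Pc$ as a simplicial $(d-1)$-sphere on $b(\Pc)$ vertices, so the LBT furnishes $|\Gamma_{0}|\ge (d-1)\cdot b(\Pc)-(d+1)(d-2)$. I then fix any $v_{0}\in V\cap(\Pc\setminus\partial\Pc)$ (which exists because $c(\Pc)>0$) and form the cone $\Gamma_{1}:=\{v_{0}*\sigma:\sigma\in\Gamma_{0}\}$; since $v_{0}$ is interior, this is a triangulation of $\Pc$ with $|\Gamma_{1}|=|\Gamma_{0}|$.

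The combinatorial heart is the third stage, in which the remaining interior points $v_{1},\dots,v_{c(\Pc)-1}\in V\cap(\Pc\setminus\partial\Pc)$ are inserted one by one. The key claim is: \emph{if $\Gamma$ is a triangulation of $\Pc$ and $v\in\Pc\setminus\partial\Pc$ is not a vertex of $\Gamma$, then the stellar subdivision of $\Gamma$ at $v$ has at least $d$ more $d$-simplices than $\Gamma$.} To prove this I would locate the unique face $\sigma\in\Gamma$ whose relative interior contains $v$, and set $k=\dim\sigma$. Because $\Pc\setminus\partial\Pc$ is open and convex while $\partial\Pc$ is closed, the connected set $\relint(\sigma)$ is entirely contained in $\Pc\setminus\partial\Pc$, so $\sigma$ is an interior face of $\Gamma$ and its link is a $(d-k-1)$-sphere, which carries at least $d-k+1$ vertices. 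Hence $\sigma$ is contained in some $m\ge d-k+1$ top-dimensional simplices of $\Gamma$, each of which is replaced by $k+1$ new $d$-simplices after stellar subdivision, a net gain of $mk\ge k(d-k+1)$. The elementary identity $k(d-k+1)-d=(k-1)(d-k)\ge 0$ on $1\le k\le d$ finishes the claim.

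Assembling the three stages yields
\[
|\Gamma_{\Pc}|\;\ge\;|\Gamma_{1}|+d(c(\Pc)-1)\;\ge\;(d-1)\cdot b(\Pc)-(d+1)(d-2)+d(c(\Pc)-1)\;=\;d\cdot c(\Pc)+(d-1)\cdot b(\Pc)-d^{2}+2,
\]
which is the required lower bound. The main obstacle is the third stage: the delicate case is when the inserted interior point falls in the relative interior of a low-dimensional interior face rather than inside a single $d$-simplex, and this is exactly where the vertex-count lower bound for the link sphere enters. The LBT for $(d-1)$-spheres, used once in the first stage, is a standard black box.
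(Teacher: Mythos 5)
Your construction is correct and, in its second and third stages, is essentially the paper's own proof: the paper also cones a triangulated boundary over one interior point and then inserts the remaining interior points one at a time, with exactly your count that a point in the relative interior of an $e$-dimensional face lying in at least $d-e+1$ top simplices yields a net gain of at least $(d-e+1)e\geq d$ facets. The genuine divergence is in how the boundary estimate is obtained. The paper does \emph{not} invoke the Lower Bound Theorem: it proves the needed inequality $|\Gamma^{(0)}|\geq (d-1)b(\Pc)-(d+1)(d-2)$ by induction on $d$, removing one boundary facet $G_0$, flattening $\partial\Pc\setminus(G_0\setminus\partial G_0)$ to a $(d-1)$-polytope with $d$ boundary points and $b(\Pc)-d$ interior points, and applying the lemma itself in dimension $d-1$; your route replaces this with a citation of the LBT for triangulated $(d-1)$-spheres (note it must be the sphere/manifold version of Barnette--Kalai, since the boundary complex of your triangulation need not be polytopal), whose facet bound $(d-1)b(\Pc)-(d+1)(d-2)$ indeed coincides numerically with what the paper's induction delivers. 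What each buys: your version is shorter and sidesteps the paper's somewhat delicate ``flatten the punctured boundary sphere'' step, but it imports a deep theorem, which runs against the paper's stated purpose of giving a short \emph{elementary} proof (the paper's induction in effect reproves the special case of the LBT it needs). Two small points to tighten in your stage three: the argument that the relative interior of $\sigma$ avoids $\partial\Pc$ should use a supporting hyperplane (connectedness alone does not give it), although for the count you really only need that $\sigma\not\subset\partial\Pc$, which is immediate since $v\in\sigma$ is interior; and the inference ``the link has at least $d-k+1$ vertices, hence $\sigma$ lies in at least $d-k+1$ facets'' should instead appeal either to the fact that a simplicial $(d-k-1)$-sphere has at least $d-k+1$ facets, or, more elementarily, to the observation that the simplicial cones obtained by projecting the stars' simplices along the affine hull of $\sigma$ must cover a neighborhood of the projected point in $\RR^{d-k}$, which forces at least $d-k+1$ of them; the paper is comparably informal at this spot.
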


\begin{proof}
We construct the required triangulation $\Gamma_\Pc$ by induction on $d$.  Let $d \geq 3$.  Let $\Gamma$ be a triangulation of $\Pc$ on $V$.  Let $\Delta$ denote the set of those $F \cap \partial \Pc$ with $F \in \Gamma$ for which $F \cap \partial \Pc$ is a $(d-1)$-simplex.  Fix $G_0 \in \Delta$.  Remove $G_0 \setminus \partial G_0$ from $\partial \Pc$, and one can assume that $\Pc' = \partial \Pc \setminus (G_0 \setminus \partial G_0)$ is a simplex in $\RR^{d-1}$ of dimension $d - 1$ via a one-point compactification.  Furthermore, the number of points in $V$ belonging to the boundary of $\Pc'$ is $b(\Pc') = d$ and that to the interior of $\Pc'$ is $c(\Pc') = b(\Pc) - d$.  Since $b(\Pc) > d$, it follows that $c(\Pc') > 0$.  The induction hypothesis yields a triangulation $\Delta'$ of $\Pc'$ on $\Pc' \cap V$ for which
\[
|\Delta'| \geq (d - 1) \cdot (b(\Pc) - d) + (d - 2) \cdot d - (d-1)^2 + 2.
\]
Let $\Gamma^{(0)} = \Delta' \cup \{G_0\}$.  Then $\partial \Pc = \bigcup_{G \in \Gamma^{(0)}} G$.  

Let $x_1, \ldots, x_c$ denote the points in $V$ belonging to the interior of $\Pc$.  Now, set 
\[
\Gamma^{(1)} = \{ {\rm conv}(G \cup \{x_1\}) : G \in \Gamma^{(0)} \},
\]
where ${\rm conv}(G \cup \{x_1\})$ is the convex hull of $G \cup \{x_1\}$ in $\RR^d$, and $\Gamma^{(1)}$ is a triangulation of $\Pc$ on $V^{(1)} = (\partial \Pc \cap V) \cup \{x_1\}$.  Since $|\Gamma^{(1)}| = |\Gamma^{(0)}| = |\Delta'| + 1$, it follows that
\begin{eqnarray*}
|\Gamma^{(1)}| & \geq & (d - 1) \cdot (b(\Pc) - d) + (d - 2) \cdot d - (d-1)^2 + 3 \\
& = & d + (d - 1) \cdot b(\Pc) - d^2 + 2.
\end{eqnarray*}

Let $c \geq 2$ and $x_2 \in F$ with $F \in \Gamma^{(1)}$.  Let $F_0$ be the smallest face of $F$ with $x_2 \in F_0$.  Then $x_2$ belongs to the interior of $F_0$.  Let $e = \dim F_0$ and $y_0, y_1, \ldots, y_e$ the vertices of $F_0$.  Thus $1 \leq e \leq d$.  Let $\{G_1, \ldots, G_q\}$ denote the set of those $G \in \Gamma^{(1)}$ for which $F_0$ is a face of $G$ and, for each $1 \leq i \leq q$, write $W_i$ for the set of vertices of $G_i$.  It follows that, for each $1 \leq i \leq q$ and for each $0 \leq j \leq e$, 
\[
G_i^{(j)} = {\rm conv}((W_i \setminus \{y_j\}) \cup \{x_2\})
\]
is a $d$-simplex.  Now, it then turns out that
\[
\Gamma^{(2)} = (\Gamma^{(1)} \setminus \{G_1, \ldots, G_q\}) \bigcup \, \left(\bigcup_{1 \leq i \leq q, \, 0 \leq j \leq e}\{G_i^{(j)}\}\right) 
\]
is a triangulation of $\Pc$ on $V^{(2)} = (\partial \Pc \cap V) \cup \{x_1, x_2\}$.  Since $F_0 \not\subset \partial \Pc$, one can regard
\[
\bigcup_{i=1}^{q} {\rm conv}(\{W_i \setminus \{y_0, \ldots, y_e\}\})
\]
as a boundary of a convex polytope of dimension $d - e$.  In particular $q \geq d - e + 1$.  Hence
\begin{eqnarray*}
|\Gamma^{(2)}| & \geq & d + (d - 1) \cdot b(\Pc) - d^2 + 2 + (d - e + 1)e \\
& \geq & 2 \cdot d + (d - 1) \cdot b(\Pc) - d^2 + 2.
\end{eqnarray*}
Continuing the procedure yields a triangulation $\Gamma^{(c)}$ of $\Pc$ on $$V^{(c)} = (\partial \Pc \cap V) \cup \{x_1, \ldots, x_c\}$$with
\[
|\Gamma^{(c)}| \geq d \cdot c(\Pc) + (d - 1) \cdot b(\Pc) - d^2 + 2,
\]
as desired.
\hspace{12cm}
\end{proof} 

\begin{Example}
{\em 
The picture drawn below demonstrates the procedure of constructing the triangulation $\Gamma_\Pc$ in the proof of Lemma \ref{Chopin}.  Let $\Pc = ABCDE$ denote the pyramid over the quadrangle $BCDE$.  Let $V = \{A,B,C,D,E,y_1,x_1,x_2\}$ where $y_1$ belongs to the boundary of $\Pc$ and where each of $x_1$ and $x_2$ belongs to the interior of $\Pc$.  Combining $y_1$ with each of $B,C,D,E$ yields the triangulation $\Gamma^{(0)}$ of the boundary $\partial \Pc$ of $\Pc$.  Combining $x_1 \in \Pc \setminus \partial \Pc$ with each of $A,B,C,D,E$ and $y_1$ yields the triangulation $\Gamma^{(1)}$ of $\Pc$ on $V^{(1)} = \{A,B,C,D,E,y_1,x_1\}$.  Let $x_2$ belong to the interior of the triangle $F_0$ with the vertices $x_1=y_0, y_1, D=y_2$.  Combining $x_2$ with each of $y_0,y_1,y_2$ yields the triangulation of $F_0$ on $\{x_2, y_0, y_1, y_2\}$.  Finally, combining $x_2$ with each of $C$ and $E$ yields the triangulation $\Gamma^{(2)}$ of $\Pc$ on $V$. 
\begin{figure}[h]
\label{}
\centering
\includegraphics[scale=0.8]{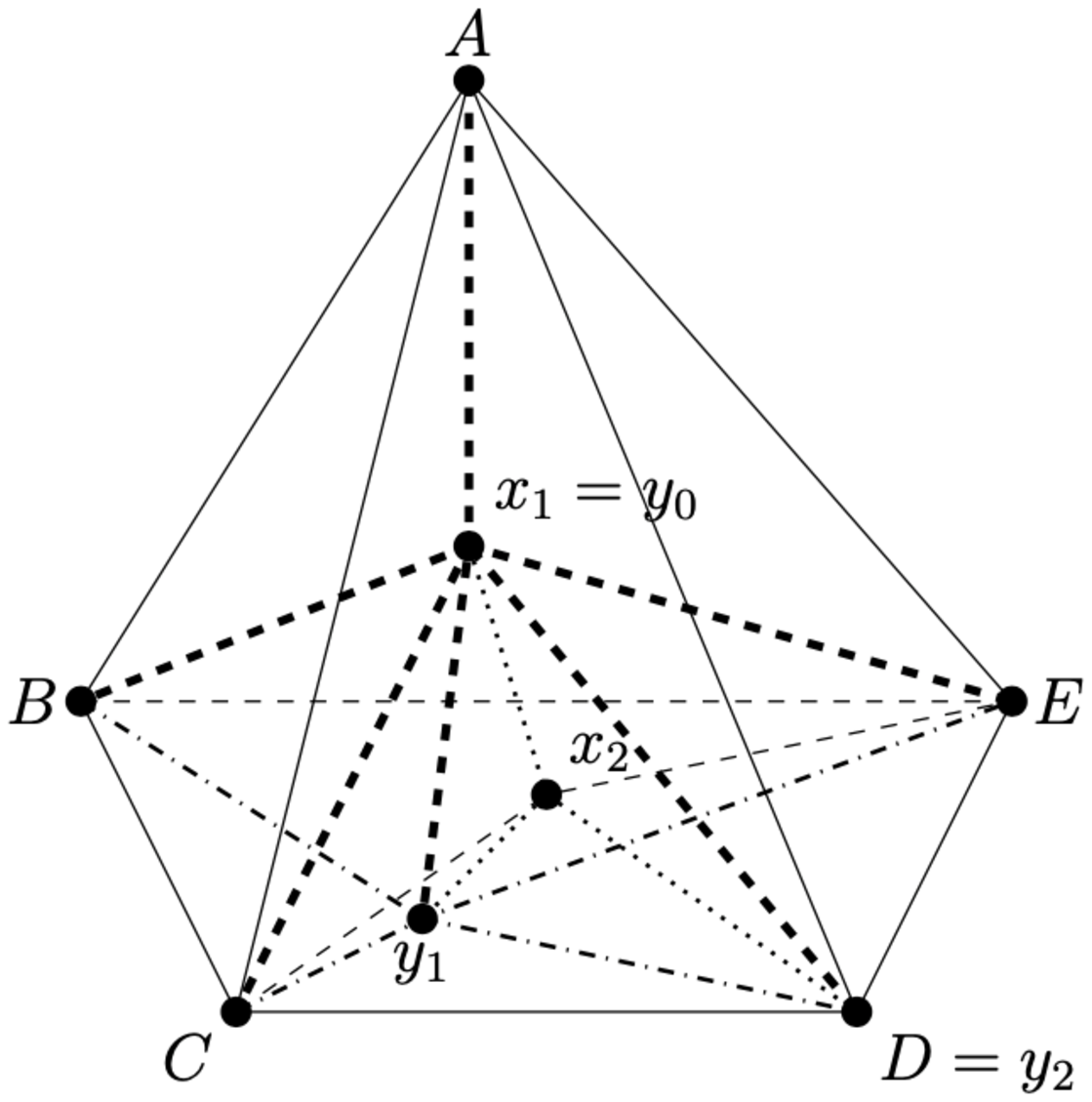}
\end{figure}
}
\end{Example}

We now come to the minimal volume formula (\ref{formula}).

\begin{Theorem}
\label{lower_bound}
Let $\Pc \subset \RR^d$ be a lattice polytope of dimension $d$.  Let $b(\Pc)$ denote the number of lattice points belonging to the boundary $\partial \Pc$ of $\Pc$ and $c(\Pc)$ that to the interior of $\Pc$.  Suppose that $c(\Pc) > 0$.  Then one has
\begin{eqnarray}
\label{formula*}
{\rm vol}(\Pc) \geq (d \cdot c(\Pc) + (d-1) \cdot b(\Pc) - d^2 + 2)/d!,
\end{eqnarray}
where ${\rm vol}(\Pc)$ is the (Lebesgue) volume of $\Pc$.
\end{Theorem}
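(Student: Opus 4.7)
The plan is to deduce the theorem directly from Lemma \ref{Chopin} applied to $V = \Pc \cap \ZZ^d$, using the classical lower bound on the volume of a lattice simplex.

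First I would apply Lemma \ref{Chopin} with $V = \Pc \cap \ZZ^d$. Since $\Pc$ is a lattice polytope, every vertex of $\Pc$ belongs to $V$, so the hypotheses of the lemma are satisfied. The quantities $b(\Pc)$ and $c(\Pc)$ in the lemma coincide with the boundary and interior lattice point counts in the theorem, and by assumption $c(\Pc) > 0$. The lemma therefore produces a triangulation $\Gamma_\Pc$ of $\Pc$ whose simplices have all vertices in $\ZZ^d$ and whose cardinality satisfies
\[
|\Gamma_\Pc| \geq d \cdot c(\Pc) + (d-1) \cdot b(\Pc) - d^2 + 2.
\]

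Next I would invoke the standard fact that any lattice $d$-simplex $F \subset \RR^d$ has $\vol(F) \geq 1/d!$. This is immediate from the formula $\vol(F) = |\det M|/d!$, where $M$ is the $d \times d$ integer matrix whose rows are the edge vectors emanating from one vertex of $F$; since $F$ is $d$-dimensional, $\det M$ is a nonzero integer, and hence $|\det M| \geq 1$.

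Finally, since the simplices in $\Gamma_\Pc$ have pairwise disjoint interiors and cover $\Pc$, additivity of volume gives
\[
\vol(\Pc) = \sum_{F \in \Gamma_\Pc} \vol(F) \geq \frac{|\Gamma_\Pc|}{d!} \geq \frac{d \cdot c(\Pc) + (d-1) \cdot b(\Pc) - d^2 + 2}{d!},
\]
which is the desired inequality (\ref{formula*}). Since all the combinatorial work has been absorbed into Lemma \ref{Chopin}, there is no real obstacle remaining; the only substantive input beyond the lemma is the elementary $1/d!$ bound for a lattice simplex.
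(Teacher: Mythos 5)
Your proposal is correct and follows essentially the same route as the paper: apply Lemma \ref{Chopin} with $V = \Pc \cap \ZZ^d$ and then use the fact that a lattice $d$-simplex has volume at least $1/d!$ (the paper phrases this as the volume being a multiple of $1/d!$, while you justify it via the determinant formula). Your write-up simply spells out the additivity-of-volume step that the paper leaves implicit.
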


\begin{proof}
Lemma \ref{Chopin} guarantees the existence of a triangulation $\Gamma_\Pc$ of $\Pc$ on $\Pc \cap \ZZ^d$ with
\begin{eqnarray}
\label{triangulation}
|\Gamma_\Pc| \geq d \cdot c(\Pc) + (d-1) \cdot b(\Pc) - d^2 + 2.
\end{eqnarray}
Since the volume of a lattice $d$-simplex of $\RR^d$ is a multiple of $1/d!$, the minimal volume formula (\ref{formula*}) follows from the inequality (\ref{triangulation}). \, \, \, \, \, \, \, \, \, \, \, \, \, \, \, \, \, \, \,  
\end{proof}

\section{Castelnuovo polytopes}
As before, let $\Pc \subset \RR^d$ be a lattice polytope of dimension $d$.  Following \cite{kawaguchi}, we say that $\Pc$ is {\em Castelnuovo} if $\Pc$ satisfies the equality of (\ref{formula}).  When $\Pc$ is Castelnuovo and when $V = \Pc \cap \ZZ^d$, the triangulation $\Gamma_\Pc$ constructed in the proof of Lemma \ref{Chopin} is unimodular.  (Recall that a triangulation $\Gamma_\Pc$ on $\Pc \cap \ZZ^d$ of a lattice polytope $\Pc \subset \RR^d$ of dimension $d$ is called {\em unimodular} if the volume of each of the $d$-simplices of $\RR^d$ belonging to $\Gamma_\Pc$ is $1/d!$.)  Furthermore, the triangulation $\Gamma_\Pc$ constructed in the proof of Lemma \ref{Chopin} is {\em regular}.  We refer the reader to \cite{HPPS} for fundamental materials on regular triangulations.  It then follows that every Castelnuovo polytope possesses a regular unimodular triangulation.   

It is reasonable to find all possible sequences $(d, b, c)$ of integers with $d \geq 3, \, b \geq d + 1, \, c \geq 1$ for which there exists a Castelnuovo polytope $\Pc \subset \RR^d$ of dimension $d$ with $b = b(\Pc)$ and $c = c(\Pc)$.

It follows from \cite{hibi_tsuchiya} that, given integers $d$ and $c$ with $d \geq 3$ and $c \geq 1$, there exists a Castelnuovo polytope (in fact, simplex) $\Pc \subset \RR^d$ of dimension $d$ with $b(\Pc) = d + 1$ and $c = c(\Pc)$.


\begin{thebibliography}{99}

\bibitem{HPPS}
C.~Haase, A.~Paffenholz, L.~C.~Piechnik and F.~Santos, Existence of unimodular triangulations -- positive results, {\em Memoirs of the American Mathematical Society} {\bf 270} (2021), \# 1321.

\bibitem{hibi}
T.~Hibi, A lower bound theorem for Ehrhart polynomials of convex polytopes, {\it Advances in Math.} {\bf 105} (1994), 162--165.

\bibitem{hibi_tsuchiya}
T.~Hibi and A.~Tsuchiya, Flat $\delta$-vectors and their Ehrhart polynomials, {\em Arch. Math. (Basel)} {\bf 108} (2017), 151--157.

\bibitem{kawaguchi}
R.~Kawaguchi, Sectional genus and the volume of a lattice polytope, {\em J. Algebraic Combin.} {\bf 53} (2021), 1253--1264.

\bibitem{stanley}
R.~Stanley, Decompositions of rational convex polytopes, {\em Annals of Discrete Math.} {\bf 6} (1980), 333--342.

\bibitem{pick}
D.~E.~Varberg, Pick's theorem revisited, {\em Amer. Math. Monthly} {\bf 92} (1985), 584--587.

\end{thebibliography}
\end{document}